\documentclass[11pt]{amsart}

\usepackage{etex}
\reserveinserts{28}
\usepackage[all]{xy}

\usepackage{amsfonts}
\usepackage{amsthm}
\usepackage{enumerate}
\usepackage{graphicx}
\usepackage{mathrsfs}
\usepackage{bm}
\usepackage{cite}
\usepackage{amssymb,amsmath} 
\usepackage{makecell}
\usepackage{tikz}
\usepackage{pgf}
\usepackage{tikz}
\usetikzlibrary{patterns}
\usepackage{pgffor}
\usepackage{pgfcalendar}
\usepackage{pgfpages}
\usepackage{shuffle,yfonts}
\usepackage{mathtools}
\DeclareFontFamily{U}{shuffle}{}
\DeclareFontShape{U}{shuffle}{m}{n}{ <-8>shuffle7 <8->shuffle10}{}


\newcommand{\bfj}{{\bf{j}}}
\newcommand{\bfk}{{\bf{k}}}
\newcommand{\bfl}{{\bf{l}}}

\catcode`!=11
\let\!int\int \def\int{\displaystyle\!int}
\let\!lim\lim \def\lim{\displaystyle\!lim}
\let\!sum\sum \def\sum{\displaystyle\!sum}
\let\!sup\sup \def\sup{\displaystyle\!sup}
\let\!inf\inf \def\inf{\displaystyle\!inf}
\let\!cap\cap \def\cap{\displaystyle\!cap}
\let\!max\max \def\max{\displaystyle\!max}
\let\!min\min \def\min{\displaystyle\!min}
\let\!frac\frac \def\frac{\displaystyle\!frac}
\catcode`!=12

\let\oldsection\section
\renewcommand\section{\setcounter{equation}{0}\oldsection}

\allowdisplaybreaks

\DeclareMathOperator*{\dep}{dep}

\newcommand{\tB}{{\widetilde{B}}}

\def\R{\mathbb{R}}
\def\N{\mathbb{N}}\def\Z{\mathbb{Z}}
\def\Q{\mathbb{Q}}

\theoremstyle{plain}
\newtheorem{thm}{Theorem}[section]
\newtheorem{lem}[thm]{Lemma}
\newtheorem{cor}[thm]{Corollary}

\theoremstyle{definition}

\newtheorem{re}[thm]{Remark}
\newtheorem{exa}[thm]{Example}

\setlength{\arraycolsep}{0.5mm}
\textwidth=160truemm \textheight=225truemm \evensidemargin=0mm \oddsidemargin=0mm \topmargin=0mm \headsep=0mm

\parindent=2em
 \allowdisplaybreaks
\usetikzlibrary{arrows,shapes,chains}
\textwidth=160truemm \textheight=225truemm \evensidemargin=0mm
\oddsidemargin=0mm \topmargin=0mm \headsep=0mm
\parindent=2em
 \allowdisplaybreaks


\begin{document}
\title[Sums Involving Harmonic Numbers]{\bf Evaluation of Some Sums Involving Powers of Harmonic Numbers}

\thanks{
Email: cexu2020@ahnu.edu.cn (C. Xu), XixiZhang2019@163.com (X. Zhang), zhaoj@ihes.fr (J. Zhao: Corresponding author)}

\date{}
\maketitle

\begin{center}
 {\sc Ce Xu and Xixi Zhang}\\
 School of Mathematics and Statistics, Anhui Normal University, Wuhu 241002, P.R.C\\
 {\sc Jianqiang Zhao}\\
 Department of Mathematics, The Bishop's School, La Jolla,\small  CA 92037, USA
\end{center}

\bigskip
\noindent{\bf Abstract.} In this note, we extend the definition of multiple harmonic sums and apply their stuffle relations to obtain explicit evaluations of the sums $R_n(p,t)=\sum\nolimits_{m=0}^n m^p H_m^t$, where $H_m$ are harmonic numbers. When $t\le 4$ these sums were first studied by Spie\ss\ around 1990 and, more recently, by Jin and Sun. Our key step first is to find an explicit formula of a special type of the extended multiple harmonic sums. This also enables us to provide a general structural result of the sums $R_n(p,t)$ for all $t\ge 0$.

\medskip
\noindent{\bf Keywords}: Bernoulli number; harmonic number; multiple harmonic sum; extended multiple harmonic sum.

\medskip
\noindent{\bf AMS Subject Classifications (2020):} 05A19; 11B73; 11M32; 68R05.

\medskip
\noindent{\bf Declarations.} 

\textbf{Funding:} Ce Xu is supported by the Scientific Research Foundation for Scholars of Anhui Normal University and the University Natural Science Research Project of Anhui Province (Grant No. KJ2020A0057)

\textbf{Conflicts of interest/Competing interests:} None.

\textbf{Availability of data and material:} N/A.

\textbf{Code availability:} N/A.

\medskip

\section{Introduction}
We begin with some basic notations. Let $\N$ (resp. $\Z$) be the set of positive integers (resp. integers) and $\N_0:=\N\cup \{0\}$.
A finite sequence $\bfk:=(k_1,\ldots, k_r)\in\N^r$ is called a \emph{composition}. We define its \emph{weight} and the \emph{depth} of $\bfk$, respectively, by
\begin{equation*}
 |\bfk|:=k_1+\cdots+k_r \quad \text{and}\quad \dep(\bfk):=r.
\end{equation*}

For any $n\in\N$ and $(k_1,\ldots,k_r)\in\Z^r$, we define the \emph{extended multiple harmonic sums} by
\begin{align}
H_n(k_1,\ldots,k_r):=\sum\limits_{n\geq n_1>\cdots>n_r>0 } \frac{1}{n_1^{k_1}\cdots n_r^{k_r}}\label{EMHSs}.
\end{align}
We set $H_n(\emptyset ):=1$ and $H_n(k_1,\ldots,k_r):=0$ if $n<r$. When $r=1$ and $k>0$, $H_n(k)=\sum\nolimits_{j=1}^n {1}/{j^k}$ is the $n$-th generalized harmonic number of order $k$, and furthermore, if $k=1$ then $H_n:=H_n(1)$ is the classical $n$-th harmonic number. Identities involving harmonic numbers and multiple harmonic sums (i.e., when all $k_j>0$) have been extensively studied in the literature (see, e.g., \cite{Zhao2016} and the references therein).

Let $(k_1,\ldots,k_r)\in\N^r$. In this note, we are particularly interested in the sums of the form
\begin{align}
H_n(-k_1,k_2,\ldots,k_r)=\sum\limits_{n\geq n_1>\cdots>n_r>0 } n_1^{k_1} \frac{1}{n_2^{k_2}\cdots n_r^{k_r}}=\sum_{m=1}^n m^{k_1}H_{m-1}(k_2,\cdots,k_r).\label{EMHSs-1}
\end{align}

Recall that there are two versions of Bernoulli numbers defined by the generating functions
\begin{equation}\label{equ:bern}
\frac{te^t}{e^t-1}=\sum_{n=0}^\infty B_n\frac{t^n}{n!} \qquad\text{and} \qquad
\frac{t}{e^t-1}=\sum_{n=0}^\infty \tB_n\frac{t^n}{n!}.
\end{equation}
It is well-known that $B_j=\tB_j$ if $j\ne1$, $B_1=1/2$ and $\tB_1=-1/2$.
By Faulhaber's formula,
\begin{align}\label{HN-N1}
H_n(-k)=\sum_{m=1}^n m^k=\frac{1}{k+1}\sum_{j=0}^k \binom{k+1}{j}B_jn^{k+1-j}
=n^k+\frac{1}{k+1}\sum_{j=0}^k \binom{k+1}{j}\tB_jn^{k+1-j}.
\end{align}

In \cite{S1990}, Spie\ss \ studied the summation
\begin{equation}\label{equ:Rnt}
R_n(d,t):=\sum_{m=0}^n m^d H_m^t\quad(n\in\N,d,t\in\N_0)
\end{equation}
and proved the following structure theorem (see \cite[Thm. 30]{S1990}).
\begin{thm}\label{thm1}
Let $p(m)$ be a polynomial in $m$ of degree $d$. Then for $t=1,2$ or $3$, there exist polynomials $q_0(n),\ldots,q_t(n)$ and $C(n)$ of degree at most $d+1$ such that
\begin{align}\label{OP1}
\sum_{m=0}^n p(m)H_m^t=\sum_{i=0}^t q_i(n)H_n^i+C(n)H_n(2),
\end{align}
for all nonnegative integers $n$. Moreover, $C(n)=0$ when $t=1,2$.
\end{thm}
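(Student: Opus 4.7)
The plan is to proceed by induction on $t\in\{0,1,2,3\}$, driven by Abel's summation by parts. The base case $t=0$ is Faulhaber's formula \eqref{HN-N1}: $\sum_{m=0}^n p(m)$ is a polynomial in $n$ of degree $d+1$, matching \eqref{OP1} with $q_0$ of degree $d+1$ and $C\equiv 0$.

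For the inductive step, set $P(m):=\sum_{j=0}^m p(j)$, a polynomial in $m$ of degree $d+1$ with $P(-1)=0$. Abel summation together with the binomial expansion of $H_m^t=(H_{m-1}+1/m)^t$ gives
\begin{equation*}
\sum_{m=0}^n p(m) H_m^t = P(n) H_n^t - \sum_{k=1}^t (-1)^{k-1}\binom{t}{k}\sum_{m=1}^n\frac{P(m-1)\,H_m^{t-k}}{m^k}.
\end{equation*}
Because $P(-1)=0$, the quotient $P(m-1)/m$ is itself a polynomial $\widetilde P(m)$ of degree $d$; iterating this partial-fraction decomposition, each $P(m-1)/m^k$ equals a polynomial in $m$ of degree $d+1-k$ (zero if this is negative) plus an explicit $\Q$-linear combination of $1/m,\dots,1/m^{k-1}$ with constant coefficients. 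Substituting, each inner sum splits into (i) sums $\sum Q(m) H_m^{t-k}$ with $\deg Q\le d+1-k\le d$, to which the inductive hypothesis applies, and (ii) auxiliary sums $U_{s,j}(n):=\sum_{m=1}^n H_m^s/m^j$ with $s+j\le t$.

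For $t\le 3$ the only auxiliaries needed are $U_{0,j}(n)=H_n(j)$ for $1\le j\le 3$ and $U_{1,1}(n)=\tfrac12(H_n^2+H_n(2))$, the latter being a consequence of the stuffle identity $H_n^2=2H_n(1,1)+H_n(2)$ together with $\sum_{m=1}^n H_m/m=H_n(1,1)+H_n(2)$. Hence every contribution fits the template $\sum_{i=0}^t q_i(n) H_n^i + C(n) H_n(2)$, and the degree bound $\deg q_i\le d+1$ propagates because the induction is invoked only on polynomials of degree at most $d$.

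The crux is tracking the coefficient of $H_n(2)$. For $t=1$ only $U_{0,1}=H_n$ arises, so $H_n(2)$ never appears. For $t=2$ the increment $H_m^2-H_{m-1}^2=2H_m/m-1/m^2$ produces $\sum P(m-1)H_m/m$ and $\sum P(m-1)/m^2$; since $P(m-1)/m$ is already a polynomial, the first term is handled by the $t=1$ case (no $H_n(2)$) and the second leaves no residual $1/m$ either, forcing $C\equiv 0$. For $t=3$ the expansion $H_m^3-H_{m-1}^3=3H_m^2/m-3H_m/m^2+1/m^3$ generates $H_n(2)$ from exactly two sources---the $U_{1,1}$ contribution of $+3\sum P(m-1)H_m/m^2$ and the $1/m^2$-residue of $-\sum P(m-1)/m^3$---whose respective coefficients $+\tfrac32\widetilde P(0)$ and $-\widetilde P(0)$ combine to $\tfrac12\widetilde P(0)$, a constant. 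The main obstacle is not a single hard identity but the careful bookkeeping of these partial-fraction residues through the nested induction, particularly the cancellation that forces $C\equiv 0$ when $t\le 2$.
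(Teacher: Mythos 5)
Your argument is correct, and it is a genuinely different route from the paper: the paper does not reprove this statement at all (it quotes Spie\ss's Theorem 30), and when it derives its own stronger, explicit versions (Theorems \ref{thm:Hn2}, \ref{thm:Hn3}, \ref{thm:SpiessGeneral}) it first expands $H_n^t$ via stuffle relations into multiple harmonic sums of depth $\le t$ and then evaluates each $H_n(-p,k_1,\dots,k_r)$ through the recurrence \eqref{GMHS-EQR1} and the explicit formula of Theorem \ref{thm-generalFormula}. You instead induct directly on $t$ using Abel summation and the binomial expansion of $(H_{m-1}+1/m)^t$, with the divisibility $m\mid P(m-1)$ and partial fractions doing the bookkeeping; the only non-elementary input is $\sum_{m\le n}H_m/m=\tfrac12(H_n^2+H_n(2))$. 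This buys a short, self-contained structural proof (and even shows $C(n)$ is the constant $\tfrac12\widetilde P(0)$, matching Jin--Sun's $\tB_d/2$ in \eqref{OP2}), whereas the paper's machinery buys completely explicit coefficients and the generalization to all $t$. Two small wording slips, neither fatal: for $t=2$ the sum $\sum_m P(m-1)/m^2=\sum_m\widetilde P(m)/m$ does in general leave a residual $\widetilde P(0)/m$ (hence an $H_n$ term); what matters is that it leaves no $1/m^2$ residue and no $U_{1,1}$ term, which is what forces $C\equiv0$. Likewise $H_n(3)$ never actually occurs among your auxiliaries (residues of $P(m-1)/m^k$ stop at $1/m^{k-1}$ since $m\mid P(m-1)$), which is essential, as $H_n(3)$ would not fit the template \eqref{OP1}; your detailed $t=3$ computation already reflects this, so only the preliminary sentence listing $U_{0,3}$ should be corrected.
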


Spie\ss\ also conjectured that Theorem \ref{thm1} holds for any positive integer $t\geq 4$. However, Jin and Sun \cite{JS2013} showed that the sum $R_n(0,4)=\sum\nolimits_{m=0}^n H_m^4$ cannot be represented by the form as conjectured by Spie\ss. Moreover, Jin and Sun \cite[Thms. 1.2 and 1.3]{JS2013} proved that
\begin{align}\label{OP2}
R_n(d,3)=\sum_{m=1}^n m^d H_m^3=H_n(-d)H_n^3+q_1(n)H_n^2+q_2(n)H_n+q_3(n)+\frac{\tB_d}{2}H_n(2),
\end{align}
where $q_1(n),q_2(n)$ and $q_3(n)$ are polynomial in $n$ of degree at most $d+1$. They also provided similar formulas for the sums
\begin{align}\label{OP3}
\sum_{m=0}^n p(m)H_m^4  \qquad\text{and} \qquad \sum_{m=0}^n m^dH_mH_m(2).
\end{align}
Nevertheless, the explicit formula of the polynomials $q_1(n),q_2(n),q_3(n)$ were not found.

The primary goal of this note is to establish explicit formulas of \eqref{OP2} and \eqref{OP3}. Our main idea is to express these sums using \eqref{EMHSs-1} for which
we are able to derive an explicit formula in general (see Theorem~\ref{thm-generalFormula}). At the end of the note, we present a result generalizing Theorem \ref{thm1} to all $t\ge 0$ (see Theorem \ref{thm:SpiessGeneral}).

\section{Explicit Formulas of Spie\ss's Results}
In this section we will study the sums of the form
\begin{equation}\label{equ:SpieSum}
\sum_{m=1}^n m^p H_{m-1}^t
\end{equation}
for $p,t\ge 0$. We remark that this is slightly different from $R_n(p,t)$ (see \eqref{equ:Rnt})
considered by Spie\ss \cite{S1990} but
they are closely related. See Remark \ref{rem:equivalent}.
The key idea to study \eqref{equ:SpieSum} is to express  $H_n^t$ by using general multiple harmonic
sums of the form \eqref{EMHSs-1} by applying the stuffle relations, also called quasi-shuffle relations (see
\cite{Hoffman2000}). In fact, more generally, we know
that for any composition $\bfk=(k_1,\ldots,k_r)$, the product $H_n(k_1)\cdots H_n(k_r)$ can be expressed in terms of a linear combination of multiple harmonic sums (for the explicit formula, see \cite[Eq. (2.4)]{WX2020}), for example
\begin{equation}\label{equ:H1H(2)}
H_nH_n(2)=H_n(1)H_n(2)=H_n(1,2)+H_n(2,1)+H_n(3).
\end{equation}

\subsection{Some Explicit Formulas of General MHSs}
To derive the general formula for $H_n(-p,\bfk)$ where $\bfk=(k_1,\dots,k_r)\in\N^r$,
we set $\bfk_i=(k_1,\dots,k_i)$ for all $1\le i\le r$ and $|\bfk_i|=k_0+\dotsm+k_i$ for all $0\le i\le r$.
\begin{thm}\label{thm-generalFormula}
Let $p\in\N_0$, $r\in\N$, and $k_1,\dots,k_r\in\N$. Put $k_0=j_0=0$, $k'_r=k_{r+1}=1$ and $k'_l=k_l$ for all $l<r$. Then
\begin{align}\label{equ:generalFormula}
&H_n(-p,k_1,\ldots,k_r)  =  \nonumber\\
-&\sum_{l=1}^r (-1)^l \left(\sum\limits_{0\le j_i\le p+i-|\bfk'_i|-|\bfj_{i-1}|\ \forall 1\le i\le l}
\prod_{h=0}^{l-1}\frac{\binom{p+h+1-|\bfk_h|-|\bfj_h|}{j_{h+1}}B_{j_{h+1}}}{p+h+1-|\bfk_h|-|\bfj_h|}\right)
n^{p+l-|\bfk_{l-1}|-|\bfj_l|} \cdot H_n(k_l,\dotsc,k_r) \nonumber\\
+&\sum_{l=1}^r (-1)^l \left(\sum_{\substack{p+l+1-|\bfk_l|\le |\bfj_l|\\ \le p+l-1-|\bfk_{l-1}|}}
\prod_{h=0}^{l-1}\frac{\binom{p+h+1-|\bfk_h|-|\bfj_h|}{j_{h+1}}B_{j_{h+1}}}{p+h+1-|\bfk_h|-|\bfj_h|}\right)
H_n\Big(|\bfk_l|+|\bfj_l|-l-p,k_{l+1},\dotsc,k_r\Big)  \nonumber\\
+&(-1)^r  \left(\sum\limits_{0\le j_l\le p+l-|\bfk_l|-|\bfj_{l-1}|\ \forall 1\le l\le r+1}
\prod_{h=0}^{r}\frac{\binom{p+h+1-|\bfk_h|-|\bfj_h|}{j_{h+1}}B_{j_{h+1}}}{p+h+1-|\bfk_h|-|\bfj_h|}\right)
 n^{p+r+1-|\bfk_r|-|\bfj_{r+1}|},
\end{align}
where we set the binomial coefficients $\binom{a}{b}=0$ if $b<0$.
\end{thm}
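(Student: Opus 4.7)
The plan is to prove \eqref{equ:generalFormula} by induction on the depth $r\ge 1$, iteratively peeling off one layer via Abel summation combined with Faulhaber's formula \eqref{HN-N1}. Applying summation by parts to \eqref{EMHSs-1} with $a_m=H_m(-p)$ (so that $a_m-a_{m-1}=m^p$) and $b_m=H_{m-1}(k_1,\ldots,k_r)$ yields the key identity
\begin{align*}
H_n(-p,k_1,\ldots,k_r)=H_n(-p)H_n(k_1,\ldots,k_r)-\sum_{m=1}^n \frac{H_m(-p)}{m^{k_1}}H_{m-1}(k_2,\ldots,k_r),
\end{align*}
which, after expanding $H_m(-p)$ via \eqref{HN-N1} and swapping the order of summation, becomes
\begin{align*}
H_n(-p,\bfk)=H_n(-p)H_n(\bfk)-\frac{1}{p+1}\sum_{j=0}^p\binom{p+1}{j}B_j\,H_n(k_1+j-p-1,k_2,\ldots,k_r).
\end{align*}
Each inner summand is interpreted via \eqref{EMHSs-1}: when $j\ge p+2-k_1$ it is an ordinary multiple harmonic sum, whereas for $j\le p+1-k_1$ it equals $H_n(-q,k_2,\ldots,k_r)$ with $q=p+1-k_1-j\ge 0$, an extended sum to which the inductive hypothesis applies.

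For the base case $r=1$, I would expand $H_n(-q)$ a second time by \eqref{HN-N1} and match the resulting three pieces---polynomial in $n$ times $H_n(k_1)$, pure MHS, and pure polynomial in $n$---against the $r=1$ specialization of \eqref{equ:generalFormula}, using the conventions $k'_1=k'_r=1$ and $k_{r+1}=k_2=1$. For the inductive step, I would split the $j$-sum at $j=p+1-k_1$: the upper range $p+2-k_1\le j\le p$ contributes (with the accumulated sign $(-1)^1$) exactly the $l=1$ instance of the second sum of \eqref{equ:generalFormula}, while the lower range $0\le j\le p+1-k_1$ activates the inductive hypothesis on $H_n(-q,k_2,\ldots,k_r)$ of depth $r-1$. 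After substitution, the outer factor $\binom{p+1}{j}B_j/(p+1)$ merges with the inductive product to form the full product $\prod_{h=0}^{l-1}$ appearing in \eqref{equ:generalFormula}: the denominator $p_1+1=p+2-k_1-j$ at the new leading level coincides with the $h=0$ factor of the product after relabeling $j\mapsto j_1$ and shifting the inductive running index $l'\mapsto l-1$. The three inductive groups then feed, respectively, into the $l\ge 2$ tails of the first sum, the $l\ge 2$ MHS terms of the second sum, and the final polynomial term of the theorem.

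The main obstacle is the bookkeeping. One must verify that the primed convention $k'_r=k_{r+1}=1$ precisely encodes the terminal step where the recursion hits an empty tail and Faulhaber is applied one last time to produce a pure polynomial in $n$; and that the index constraints $|\bfj_l|\le p+l-|\bfk'_l|$ (first sum), $p+l+1-|\bfk_l|\le|\bfj_l|\le p+l-1-|\bfk_{l-1}|$ (second sum), and $|\bfj_l|\le p+l-|\bfk_l|$ for all $l\le r+1$ (third sum) correspond respectively to the three possible fates of the iterative recursion---stopping at level $l$ with a tail $H_n(k_l,\ldots,k_r)$, stopping at level $l$ with an MHS of first argument in $[1,k_l-1]$, and continuing all the way through level $r+1$---and that these three regimes partition the full domain of multi-indices $(j_1,\ldots,j_{r+1})$ exactly. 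Once this combinatorial matching is verified, aggregating the contributions of all three regimes through the induction produces precisely \eqref{equ:generalFormula}.
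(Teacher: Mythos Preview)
Your proposal is correct and follows essentially the same route as the paper's own proof. You derive the key recurrence
\[
H_n(-p,\bfk)=H_n(-p)H_n(\bfk)-\frac{1}{p+1}\sum_{j=0}^p\binom{p+1}{j}B_j\,H_n(k_1+j-p-1,k_2,\ldots,k_r)
\]
via Abel summation, whereas the paper obtains the identical identity (its \eqref{GMHS-EQR1}) by direct rearrangement of the nested sum; from there both arguments split the $j$-range at $p+1-k_1$ and proceed by induction on $r$, with the paper explicitly leaving the inductive bookkeeping (which you sketch in more detail) to the reader.
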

\begin{proof}
According to definition \eqref{EMHSs-1}, we have the following recurrence relation
\begin{align}\label{GMHS-EQR1}
&H_n(-p,k_1,\ldots,k_r)\nonumber\\
&=\sum_{m=1}^n m^p H_{m-1}(k_1,\ldots,k_r)=\sum_{m=1}^n m^p\sum\limits_{m>n_1>\cdots>n_r\geq 1}\frac{1}{n_1^{k_1}\cdots n_r^{k_r}}\nonumber\\
&=\sum_{m=1}^n m^p\left(H_{m}(k_1,\ldots,k_r)-\frac{H_{m-1}(k_2,\ldots,k_r)}{m^{k_{1}}}\right)\nonumber\\
&=\sum_{m=1}^n m^pH_{m}(k_1,\ldots,k_r)-\sum_{m=1}^n m^{p-k_{1}}H_{m-1}(k_2,\ldots,k_r)\nonumber\\
&=\sum_{m=1}^n m^p\sum\limits_{m\geq n_1>\cdots>n_r\geq 1} \frac{1}{n_1^{k_1}\cdots n_r^{k_r}}-\sum_{m=1}^n m^{p-k_{1}}H_{m-1}(k_2,\ldots,k_r)\nonumber\\
&=\sum_{n_{1}=1}^n\frac{H_{n_{1}-1}(k_2,\ldots,k_r)}{n_{1}^{k_{1}}}\sum_{m=n_{1}}^n m^p-\sum_{m=1}^n m^{p-k_{1}}H_{m-1}(k_2,\ldots,k_r)\nonumber\\
&=\sum_{n_{1}=1}^n\frac{H_{n_{1}-1}(k_2,\ldots,k_r)}{n_{1}^{k_{1}}}\left(\sum_{m=1}^n m^p-\sum_{m=1}^{n_{1}-1} m^p-n_{1}^{p}\right)\nonumber\\
&=H_n(-p)H_n(k_1,\ldots,k_r)-\sum_{n_{1}=1}^n\frac{H_{n_{1}-1}(k_2,\ldots,k_r)}{n_{1}^{k_{1}}}\sum_{m=1}^{n_{1}} m^p\nonumber\\
&=H_n(-p)H_n(k_1,\ldots,k_r)
-\sum_{j=0}^p\binom{p+1}{j}\frac{B_{j}}{p+1}\sum_{n_{1}=1}^n\frac{H_{n_{1}-1}(k_2,\ldots,k_r)}{n_{1}^{k_{1}}} n_{1}^{p+1-j}\nonumber\\
&=H_n(-p)H_n(k_1,\ldots,k_r)
- \sum_{j=0}^p\binom{p+1}{j}\frac{B_{j}}{p+1}H_n(k_1+j-p-1,k_{2},\ldots,k_r).
\end{align}
If $k_1>p+1$ in \eqref{GMHS-EQR1} then $k_1+j-p-1\in \N$, so $H_n(k_1+j-p-1,k_{2},\ldots,k_r)$ is the classical multiple harmonic sum. If $1\leq k_1\leq p+1$ then \eqref{GMHS-EQR1} can be rewritten in the following form
\begin{align}\label{GMHS-EQR3}
H_n(-p,\bfk)=H_n(-p)H_n(\bfk)
&-\sum_{j=p+2-k_{1}}^p\binom{p+1}{j}\frac{B_{j}}{p+1}H_n(k_1+j-p-1,k_{2},\ldots,k_r)\nonumber\\
&-\sum_{j=0}^{p+1-k_{1}}\binom{p+1}{j}\frac{B_{j}}{p+1}H_n(k_1+j-p-1,k_{2},\ldots,k_r).
\end{align}
When $r=1$ this implies the theorem by replacing $k$ by $p$ in \eqref{HN-N1}:
\begin{align}\label{GMHS-EQR2}
&H_n(-p,k_1)=H_n(-p)H_n(k_1)
-\sum_{j_1=p+2-k_{1}}^p\binom{p+1}{j_1}\frac{B_{j_1}}{p+1}H_n(k_1+j_1-p-1)\nonumber\\
&-\sum_{j_1=0}^{p+1-k_{1}}\sum_{j_2=0}^{p+1-k_1-j_1}
\binom{p+1}{j_1}\binom{p+2-k_1-j_1}{j_2}\frac{B_{j_1}B_{j_2}}{(p+1)(p+2-k_1-j_1)}n^{p+2-k_1-j_1-j_2}.
\end{align}
By applying the above recurrence relation, one can prove the theorem by induction on $r$ using \eqref{GMHS-EQR3}.
We leave the detail to the interested reader.
\end{proof}

To state our results more concisely, we set $j_0=a_1=0$ and for all $a_2,\dots,a_r\in\N_0$ we define the polynomials
\begin{equation*}
C^{(p)}_{a_2,\dots,a_r}(x):=
\sum\limits_{j_1+\dots+j_r\leq p-a_r \atop j_1,\dotsc,j_r\geq0}
\left(\prod_{i=1}^r \frac{\binom{p+1-a_i-j_1-\dotsm-j_{i-1}}{j_i}B_{j_i}}{p+1-a_i-j_1-\dotsm-j_{i-1}} \right) x^{p+1-a_r-j_1-\dotsm-j_r}.
\end{equation*}
In particular, we see that the subscript on the left-hand side is vacuous if $r=1$ and in this case $C^{(p)}(n)=H_n(-p)$.

\begin{cor}\label{cor-HN1}
For a composition $\bfk=(k_1,\ldots,k_r)$ and $p\in \N_0$, the sum $H_n(-p,\bfk)$ can be expressed in terms of a combination of products of polynomial in $n$ of degree $\leq p+1$ and multiple harmonic sums with depth $\leq r$. In particular, if $\bfk=(\{1\}_r)$ then we have
\begin{align}\label{GMHS-EQ1}
H_n(-p,\{1\}_{r})=H_n(-p)H_n(\{1\}_{r})+ \sum_{i=1}^r(-1)^{i}C^{(p)}_{\underbrace{\scriptstyle 0,\dotsc,0}_i}(n)H_n(\{1\}_{r-i}),
\end{align}
where $\{1\}_{r}$ means the string obtained by repeating $1$ exactly $r$ times.
\end{cor}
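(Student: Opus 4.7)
The plan is to derive Corollary~\ref{cor-HN1} as a direct specialization and bookkeeping consequence of Theorem~\ref{thm-generalFormula}, which is already in hand. For the first (general) assertion, I would read off each of the three summations in \eqref{equ:generalFormula}: the first yields products $n^{p+l-|\bfk_{l-1}|-|\bfj_l|}\,H_n(k_l,\dotsc,k_r)$, with MHS depth $r-l+1\le r$ and, since $|\bfk_{l-1}|\ge l-1$ (each $k_i\ge 1$) and $|\bfj_l|\ge 0$, polynomial degree at most $p+1$; the second yields classical multiple harmonic sums $H_n(|\bfk_l|+|\bfj_l|-l-p,k_{l+1},\dotsc,k_r)$ (the summation range forces the leading index to be $\ge 1$) of depth $r-l+1\le r$, viewed as constants times $n^0$; and the third is a pure polynomial of degree $\le p+1$ by the same estimate $|\bfk_r|\ge r$. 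Collecting these three contributions establishes the first sentence.

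For the special case $\bfk=\{1\}_r$, I would set $k_1=\cdots=k_r=1$ (so $|\bfk_i|=i$ and $\bfk'=\bfk$) throughout \eqref{equ:generalFormula} and make three observations. First, the middle sum in the theorem vanishes identically, because the constraint $p+l+1-|\bfk_l|\le|\bfj_l|\le p+l-1-|\bfk_{l-1}|$ reduces to $p+1\le|\bfj_l|\le p$, which is empty. Second, every surviving exponent of $n$ collapses to $n^{p+1-|\bfj_\bullet|}$ and every weight $\frac{\binom{p+h+1-|\bfk_h|-|\bfj_h|}{j_{h+1}}B_{j_{h+1}}}{p+h+1-|\bfk_h|-|\bfj_h|}$ collapses to $\frac{\binom{p+1-|\bfj_h|}{j_{h+1}}B_{j_{h+1}}}{p+1-|\bfj_h|}$. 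Third, the range $0\le j_i\le p-|\bfj_{i-1}|$ matches exactly the summation domain $|\bfj_l|\le p$ of $C^{(p)}_{0,\dotsc,0}$ with $l-1$ zero subscripts (using $a_i\equiv 0$), so the $l$-th layer of the theorem's first sum equals $-(-1)^l\,C^{(p)}_{\underbrace{\scriptstyle 0,\dotsc,0}_{l-1}}(n)\,H_n(\{1\}_{r-l+1})$ and the third term of the theorem equals $(-1)^r C^{(p)}_{\underbrace{\scriptstyle 0,\dotsc,0}_{r}}(n)$, matching the $i=r$ summand of \eqref{GMHS-EQ1} via $H_n(\{1\}_0)=1$.

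To finish, I would peel off the $l=1$ term of the first sum of \eqref{equ:generalFormula} and identify it with $H_n(-p)H_n(\{1\}_r)$: for $l=1$ the coefficient becomes $\sum_{j_1=0}^{p}\binom{p+1}{j_1}\frac{B_{j_1}}{p+1}\,n^{p+1-j_1}$, which equals $H_n(-p)$ by Faulhaber's formula \eqref{HN-N1}. Reindexing the remaining terms via $i=l-1$ (for $1\le i\le r-1$) and adjoining the $i=r$ contribution from the third summand then produces \eqref{GMHS-EQ1} exactly in the form stated.

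The only genuinely delicate point is notational: one must track carefully how the subscript length in $C^{(p)}_{a_2,\dotsc,a_r}$ (which has $r-1$ entries $a_2,\dotsc,a_r$) is related to the running index $l$ in Theorem~\ref{thm-generalFormula} and to the index $i$ in Corollary~\ref{cor-HN1}, including the edge cases $r=1$ (subscript vacuous, $C^{(p)}(n)=H_n(-p)$) and $i=r$ (paired with $H_n(\emptyset)=1$). Once these indexing conventions are pinned down, the corollary is a direct reading of the theorem and no further computation is required.
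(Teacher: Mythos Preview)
Your proposal is correct and follows exactly the paper's approach: both prove the corollary by direct specialization of Theorem~\ref{thm-generalFormula}, with the key observation that the middle sum in \eqref{equ:generalFormula} is vacuous when $k_l=1$. The paper's proof is a single sentence (``Observe that the $l$-th term in the second sum of \eqref{equ:generalFormula} is vacuous if $k_l=1$. The rest of the proof is straight-forward.''), and your write-up is precisely the ``straight-forward'' bookkeeping the paper omits, including the index-shift $i=l-1$ and the identification of the $l=1$ term with $H_n(-p)H_n(\{1\}_r)$ via Faulhaber.
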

\begin{proof}
Observe that the $l$-th term in the second sum of \eqref{equ:generalFormula} is vacuous if $k_l=1$. The rest of the proof
is straight-forward.
\end{proof}

Setting $r=2$ in \eqref{GMHS-EQ1} and noting the fact that
\begin{equation}\label{equ:stufflezeta11}
H_n(1,1)=\frac{H_n^{2}-H_n(2)}{2},
\end{equation}
we get the following corollary.
\begin{cor} For $p\in \N_0$,
\begin{align}\label{GMHS-EQ1T}
H_n(-p,1,1)&=H_n(-p)H_n(\{1\}_2)-C^{(p)}_{0}(n)H_n+C^{(p)}_{0,0}(n) \notag\\
&=\frac12 H_n(-p)(H_n^{2}-H_n(2))-C^{(p)}_{0}(n)H_n+C^{(p)}_{0,0}(n).
\end{align}
\end{cor}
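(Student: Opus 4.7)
The plan is to derive both equalities as immediate specializations of formula \eqref{GMHS-EQ1} (Corollary \ref{cor-HN1}) together with the stuffle identity \eqref{equ:stufflezeta11}. First, I would set $r=2$ and $\bfk=(1,1)$ in \eqref{GMHS-EQ1}. The sum on the right-hand side then has exactly two terms, corresponding to $i=1$ and $i=2$:
\begin{equation*}
\sum_{i=1}^{2}(-1)^{i}C^{(p)}_{\underbrace{\scriptstyle 0,\dots,0}_{i}}(n)\,H_n(\{1\}_{2-i})
=-C^{(p)}_{0}(n)\,H_n(\{1\}_{1})+C^{(p)}_{0,0}(n)\,H_n(\emptyset).
\end{equation*}

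Next, I would simplify using the conventions $H_n(\{1\}_{1})=H_n(1)=H_n$ and $H_n(\emptyset)=1$ established after \eqref{EMHSs}. Substituting these into the right-hand side of \eqref{GMHS-EQ1} immediately yields the first equality
\begin{equation*}
H_n(-p,1,1)=H_n(-p)\,H_n(\{1\}_{2})-C^{(p)}_{0}(n)\,H_n+C^{(p)}_{0,0}(n).
\end{equation*}

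For the second equality, I would apply the stuffle relation \eqref{equ:stufflezeta11}, which states that $H_n(\{1\}_{2})=H_n(1,1)=\tfrac12\bigl(H_n^{2}-H_n(2)\bigr)$. Substituting this expression for $H_n(\{1\}_{2})$ into the first equality produces the second form, completing the proof.

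Since both steps are purely formal substitutions into results already established in the paper, there is no real obstacle here; the only point requiring attention is making sure that the indexing convention for $C^{(p)}_{a_2,\dots,a_r}(x)$ is applied consistently (in particular that the subscript is vacuous when $r=1$, so $C^{(p)}_{0}(n)$ corresponds to the $i=1$ term with a single zero subscript, and $C^{(p)}_{0,0}(n)$ corresponds to the $i=2$ term).
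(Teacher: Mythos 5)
Your proposal is correct and follows exactly the paper's route: the corollary is obtained by setting $r=2$ in \eqref{GMHS-EQ1} and then invoking the stuffle identity \eqref{equ:stufflezeta11} for the second equality. The bookkeeping with $H_n(\{1\}_1)=H_n$, $H_n(\emptyset)=1$, and the subscripts of $C^{(p)}_{0}(n)$, $C^{(p)}_{0,0}(n)$ is handled consistently with the paper's conventions.
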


Taking $r=2$ in \eqref{equ:generalFormula} we can get the following corollary.
\begin{cor} For positive integers $k_1,k_2$ and nonnegative $p$, we have
\begin{align}\label{GMHS-k1k2}
&H_n(-p,k_1,k_2)=H_n(-p)H_n(k_1,k_2)-\frac{1}{p+1}\sum_{j=p+2-k_{1}}^p\binom{p+1}{j}B_{j}H_n(k_1+j-p-1,k_{2})\nonumber\\
&\quad+\frac{1}{p+1}\sum_{j_{1}=0}^{p+1-k_{1}}\sum_{j_{2}=p+3-k_{1}-k_{2}-j_{1}}^{p+1-k_{1}-j_{1}}
\frac{\binom{p+1}{j_{1}}\binom{p+2-k_{1}-j_{1}}{j_{2}}}{p+2-k_{1}-j_{1}}B_{j_{1}}B_{j_{2}}H_n(k_1+k_2+j_{1}+j_{2}-p-2)\nonumber\\
&\quad+\frac{1}{p+1}\sum_{j_{1}=0}^{p+1-k_{1}}\sum_{j_{2}=0}^{p+2-k_{1}-k_{2}-j_{1}}\sum_{j_{3}=0}^{p+2-k_{1}-k_{2}-j_{1}-j_{2}}
\frac{\binom{p+1}{j_{1}}\binom{p+2-k_{1}-j_{1}}{j_{2}}\binom{p+3-k_{1}-k_{2}-j_{1}-j_{2}}{j_{3}}}{(p+2-k_{1}-j_{1})(p+3-k_{1}-k_{2}-j_{1}-j_{2})}\nonumber\\
&\quad\quad\quad\quad\quad\quad\quad\quad\quad\quad\quad\quad\quad\quad\quad\quad\quad\quad\quad\times B_{j_{1}}B_{j_{2}}B_{j_{3}}n^{p+3-k_{1}-k_{2}-j_{1}-j_{2}-j_{3}}\nonumber\\
&\quad-\frac{1}{p+1}\sum_{j=0}^{p+1-k_{1}}\sum_{l=0}^{p+1-k_{1}-j}\frac{\binom{p+1}{j}\binom{p+2-k_{1}-j}{l}}{p+2-k_{1}-j}
n^{p+2-k_{1}-j-l}B_{j}B_{l}H_n(k_2).
\end{align}
\end{cor}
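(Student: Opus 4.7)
The statement is the $r=2$ specialization of Theorem \ref{thm-generalFormula}, so one route is to substitute $r=2$ into \eqref{equ:generalFormula} and reorganize. A more transparent path, which mirrors the inductive mechanism behind Theorem \ref{thm-generalFormula}, is to apply the depth-reducing recurrence \eqref{GMHS-EQR3} once at $r=2$ and then feed the resulting depth-one extended harmonic sums through the $r=1$ identity \eqref{GMHS-EQR2}, which has already been established. I would follow the latter: the bookkeeping is explicit and it exhibits the $r=2$ case as a direct consequence of the $r=1$ case.

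First I would write \eqref{GMHS-EQR3} with $\bfk=(k_1,k_2)$ as $H_n(-p,k_1,k_2)=H_n(-p)H_n(k_1,k_2)-S_1-S_2$, where $S_1$ is the sum over $j\in[p+2-k_1,\,p]$ (so that $k_1+j-p-1\ge 1$), each summand involving an ordinary MHS $H_n(k_1+j-p-1,k_2)$, and $S_2$ is the sum over $j_1\in[0,\,p+1-k_1]$, each summand involving $H_n(-p',k_2)$ with $p':=p+1-k_1-j_1\ge 0$. The contribution $-S_1$ reproduces the second line of the statement verbatim, and the first term $H_n(-p)H_n(k_1,k_2)$ is already in the desired form.

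Next I would apply \eqref{GMHS-EQR2} to each $H_n(-p',k_2)$ appearing in $S_2$, substituting $p\to p'$ and $k_1\to k_2$. This splits $-S_2$ into three sub-contributions after resumming over $j_1$. Sub-contribution (i), coming from the $H_n(-p')H_n(k_2)$ piece, becomes an $n^{p+2-k_1-j-l}H_n(k_2)$ double sum after expanding $H_n(-p')$ via Faulhaber's formula \eqref{HN-N1}; this matches the final line of the corollary. Sub-contribution (ii), coming from the finite $j_2$-sum over $[p'+2-k_2,\,p']=[p+3-k_1-k_2-j_1,\,p+1-k_1-j_1]$, yields the middle MHS term $H_n(k_2+j_2-p'-1)=H_n(k_1+k_2+j_1+j_2-p-2)$ with precisely the stated binomial coefficients under the translation $p'+1=p+2-k_1-j_1$. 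Sub-contribution (iii), coming from the double $(j_2,j_3)$-polynomial sum in \eqref{GMHS-EQR2}, produces the triple-sum term carrying $n^{p'+2-k_2-j_2-j_3}=n^{p+3-k_1-k_2-j_1-j_2-j_3}$, again with the stated coefficients.

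The only non-routine point, and hence the main obstacle, is keeping signs and index ranges straight: \eqref{GMHS-EQR3} carries an overall minus in front of $S_2$, while \eqref{GMHS-EQR2} contributes minus signs in front of both its truncated $j_2$-sum and its double polynomial sum. Thus sub-contribution (i) keeps a single minus sign (matching the last line of the statement), while (ii) and (iii) pick up $(-)\cdot(-)=+$, as shown. I would carry out a careful verification of the upper/lower summation bounds after the shift $p'+1=p+2-k_1-j_1$, and would sanity-check the output at $k_1=k_2=1$ against \eqref{GMHS-EQ1T}. No new ideas beyond \eqref{HN-N1}, \eqref{GMHS-EQR2}, and \eqref{GMHS-EQR3} are required.
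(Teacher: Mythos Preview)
Your proposal is correct and essentially follows the paper's approach: the paper simply says to take $r=2$ in \eqref{equ:generalFormula}, and your argument unpacks exactly the $r=2$ step of the inductive proof of Theorem~\ref{thm-generalFormula} by applying \eqref{GMHS-EQR3} once and then \eqref{GMHS-EQR2}. The sign and index bookkeeping you outline is accurate and the sanity check against \eqref{GMHS-EQ1T} is a sensible safeguard.
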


For $p\in \N_0$, put $D^{(p)}_{a_2,\dots,a_r}(B):=C^{(p)}_{a_2,\dots,a_r}(B)/B$ with $B^i$ understood as $B_i$
which is the convention in umbral calculus.
Setting $(k_1,k_2)=(1,2)$ and $(2,1)$ in \eqref{GMHS-k1k2}, respectively, we get the following two formulas:
\begin{align}\label{GMHS-p12}
H_n(-p,1,2)
&=H_n(-p)H_n(1,2)+D^{(p)}(B) H_n-C^{(p)}_{0}(n)H_n(2)+C^{(p)}_{0,1}(n),\\
H_n(-p,2,1)
&=H_n(-p)H_n(2,1)-B_{p}H_n(1,1)-C^{(p)}_{1}(n)H_n+C^{(p)}_{1,1}(n).\label{GMHS-p21}
\end{align}

For $p=0,1$ in \eqref{GMHS-p12} and \eqref{GMHS-p21}, we get
\begin{align*}
H_n(0,1,2)&=nH_n(1,2)-nH_n(2)+H_n,\\
H_n(0,2,1)&=nH_n(2,1)-\frac{H_n^{2}-H_n(2)}{2},\\
H_n(-1,1,2)&=\frac{n(n+1)}{2}H_n(1,2)-\frac{n(n+3)}{4}H_n^{2}+\frac{3}{4}H_n+\frac{1}{4}n,\\
H_n(-1,2,1)&=\frac{n(n+1)}{2}H_n(2,1)-\frac{1}{4}(H_n^{2}-H_n(2))-\frac{1}{2}nH_n+\frac{1}{2}n.
\end{align*}

\subsection{Explicit Formulas of Spie\ss's results}
We now present some explicit evaluations of Spie\ss's results on harmonic numbers (Theorem \ref{thm1}).

\begin{thm}\label{thm:Hn2} For $p\in\N_0$, we have
\begin{align}\label{GMHS-EQ5}
\sum_{m=1}^{n}m^{p}H_{m-1}^{2}=C^{(p)}(n)H_n^2-(2C_0^{(p)}(n)+B_p)H_n+2C_{0,0}^{(p)}(n)-C_1^{(p)}(n).
\end{align}
\end{thm}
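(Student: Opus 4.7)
The plan is to reduce the sum to a $\Z$-linear combination of the extended multiple harmonic sums $H_n(-p,1,1)$ and $H_n(-p,2)$, and then invoke the explicit formulas already established in \eqref{GMHS-EQR2} and \eqref{GMHS-EQ1T}.

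The first step will be the quasi-shuffle (stuffle) identity $H_{m-1}^2 = 2 H_{m-1}(1,1) + H_{m-1}(2)$, an immediate rearrangement of \eqref{equ:stufflezeta11}. Multiplying through by $m^p$, summing $m$ from $1$ to $n$, and recognizing the right-hand side via definition \eqref{EMHSs-1}, one obtains
\begin{equation*}
\sum_{m=1}^n m^p H_{m-1}^2 = 2 H_n(-p,1,1) + H_n(-p,2).
\end{equation*}

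Next I would substitute the closed form \eqref{GMHS-EQ1T} for $H_n(-p,1,1)$, noting that $H_n(-p) = C^{(p)}(n)$, and then specialize \eqref{GMHS-EQR2} at $k_1=2$ for $H_n(-p,2)$. In the latter, the finite sum over $j_1$ collapses to just the $j_1=p$ contribution, which equals $-B_p H_n$, while the remaining double sum matches the polynomial $C_1^{(p)}(n)$ verbatim once the index range $j_1+j_2\le p-1$ and the product of Bernoulli/binomial weights are aligned with the defining formula of $C^{(p)}_{a_2,\dots,a_r}$ at $r=2,\,a_2=1$.

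Adding $2H_n(-p,1,1)$ and $H_n(-p,2)$, the $\pm C^{(p)}(n)H_n(2)$ contributions cancel, the $H_n$-coefficients consolidate to $-(2C_0^{(p)}(n)+B_p)$, and the remaining polynomial part becomes $2C_{0,0}^{(p)}(n)-C_1^{(p)}(n)$, producing exactly \eqref{GMHS-EQ5}. The only real obstacle, as far as I can see, will be the bookkeeping needed to identify the double sum in the $k_1=2$ specialization of \eqref{GMHS-EQR2} with $C_1^{(p)}(n)$; once the summation indices and the Bernoulli/binomial weight factors are matched against the defining formula of $C^{(p)}_{a_2,\dots,a_r}$, the identification is immediate.
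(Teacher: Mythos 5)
Your proposal is correct and follows essentially the same route as the paper: the stuffle reduction $\sum_{m=1}^n m^pH_{m-1}^2=2H_n(-p,1,1)+H_n(-p,2)$, the closed form \eqref{GMHS-EQ1T} for $H_n(-p,1,1)$, and the $k_1=2$ specialization of \eqref{GMHS-EQR2} giving $H_n(-p,2)=H_n(-p)H_n(2)-B_pH_n-C_1^{(p)}(n)$, after which the $H_n(-p)H_n(2)$ terms cancel exactly as you describe. Your bookkeeping identifying the residual double sum with $C_1^{(p)}(n)$ is also correct.
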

\begin{proof} By the stuffle relation \eqref{equ:stufflezeta11} we have
\begin{equation*}
\sum_{m=1}^{n}m^{p}H_{m-1}^{2}=2H_n(-p,1,1)+H_n(-p,2).
\end{equation*}
Taking $k=2$ in \eqref{GMHS-EQR2} we see that
\begin{align*}
H_n(-p,2)=H_n(-p)H_n(2)- B_pH_n-C_1^{(p)}(n).
\end{align*}
Combining the above with \eqref{GMHS-EQ1T}, we arrive at \eqref{GMHS-EQ5} immediately.
\end{proof}

\begin{exa}
Letting $p=0,1$ in \eqref{GMHS-EQ5} we get
\begin{align*}
\sum_{m=1}^{n}H_{m-1}^{2}&=nH_n^{2}-(2n+1)H_n+2n,\\
\sum_{m=1}^{n}mH_{m-1}^{2}
&=\frac{n(n+1)}{2}H_n^{2}-\frac{n^{2}+3n+1}{2}H_n+\frac{n(n+5)}{4}.
\end{align*}
\end{exa}

\begin{thm} For $p\in\N_0$, we have
\begin{align}\label{GMHS-EQa}
\sum_{m=1}^{n}m^{p}H_{m-1}H_{m-1}(2)&=H_n(-p)H_nH_n(2)-\frac{B_p}{2}H_n^2+\bigg(D^{(p)}(B)-C_1^{(p)}(n)-\frac{p}{2}B_{p-1}\bigg)H_n\nonumber\\
&\quad-\bigg(C_0^{(p)}(n)+\frac{B_p}{2}\bigg)H_n(2)+C_{0,1}^{(p)}(n)+C_{1,1}^{(p)}(n)-C_2^{(p)}(n).
\end{align}
\end{thm}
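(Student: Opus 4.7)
The plan is to mirror the strategy used for Theorem \ref{thm:Hn2}: reduce $H_{m-1}H_{m-1}(2)$ to a sum of depth-$\leq 2$ extended multiple harmonic sums via stuffle, multiply by $m^p$, sum, and substitute the closed forms already derived in Section~2.

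The first step is to apply \eqref{equ:H1H(2)} at level $m-1$,
$$H_{m-1}H_{m-1}(2)=H_{m-1}(1,2)+H_{m-1}(2,1)+H_{m-1}(3),$$
so that by \eqref{EMHSs-1},
$$\sum_{m=1}^n m^p H_{m-1}H_{m-1}(2)=H_n(-p,1,2)+H_n(-p,2,1)+H_n(-p,3).$$
Here \eqref{GMHS-p12} and \eqref{GMHS-p21} handle the first two summands. For the third I would invoke \eqref{GMHS-EQR2} with $k_1=3$: the classical (positive-argument) part of the inner sum ranges over $p-1\le j_1\le p$, yielding, after using $\binom{p+1}{p-1}/(p+1)=p/2$ and $\binom{p+1}{p}/(p+1)=1$,
$$H_n(-p,3)=H_n(-p)H_n(3)-\frac{p}{2}B_{p-1}H_n-B_p H_n(2)-C_2^{(p)}(n),$$
since the residual polynomial double sum matches the definition of $C_2^{(p)}(n)$ under the convention $a_2=2$.

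The last step is to collect like terms. The three $H_n(-p)$-multiples assemble, via \eqref{equ:H1H(2)} in reverse, into the leading $H_n(-p)H_nH_n(2)$. The term $-B_p H_n(1,1)$ from \eqref{GMHS-p21} becomes $-\frac{B_p}{2}(H_n^2-H_n(2))$ by \eqref{equ:stufflezeta11}; this supplies the $-\frac{B_p}{2}H_n^2$ contribution and, combined with $-C_0^{(p)}(n)H_n(2)$ from \eqref{GMHS-p12} and $-B_p H_n(2)$ from $H_n(-p,3)$, the full coefficient $-\bigl(C_0^{(p)}(n)+\frac{B_p}{2}\bigr)$ of $H_n(2)$. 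The $H_n$-coefficient consolidates $D^{(p)}(B)$, $-C_1^{(p)}(n)$ and $-\frac{p}{2}B_{p-1}$, and the polynomial tail becomes $C_{0,1}^{(p)}(n)+C_{1,1}^{(p)}(n)-C_2^{(p)}(n)$, exactly matching the right-hand side of \eqref{GMHS-EQa}. I expect the only real obstacle to be bookkeeping: carefully identifying the two boundary contributions in the reduction of $H_n(-p,3)$ and verifying that the polynomial remainder is indeed $C_2^{(p)}(n)$. No genuinely new identity is needed beyond those already established in Section~2.
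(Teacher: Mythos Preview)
Your proposal is correct and follows exactly the route taken in the paper: apply the stuffle identity \eqref{equ:H1H(2)} at $m-1$, sum against $m^p$ to obtain $H_n(-p,1,2)+H_n(-p,2,1)+H_n(-p,3)$, and then substitute \eqref{GMHS-p12}, \eqref{GMHS-p21}, and the $k_1=3$ case of \eqref{GMHS-EQR2}. Your term-by-term collection (including the expansion of $-B_pH_n(1,1)$ via \eqref{equ:stufflezeta11}) is accurate and in fact spells out the ``direct calculation'' that the paper leaves implicit.
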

\begin{proof} Applying identities \eqref{equ:H1H(2)}, \eqref{GMHS-EQR2}, \eqref{GMHS-p12} and \eqref{GMHS-p21}, we obtain the explicit evaluation of sum $\sum_{m=1}^{n}m^{p}H_{m-1}H_{m-1}(2)$ by a direct calculation. \end{proof}

\begin{exa}
Letting $p=0,1$ in \eqref{GMHS-EQa}, we get
\begin{align*}
\sum_{m=1}^{n}H_{m-1}H_{m-1}(2)
&=nH_nH_n(2)-\frac{1}{2}H_n^{2}-\frac{2n+1}{2}H_n(2)+H_n,\\
\sum_{m=1}^{n}mH_{m-1}H_{m-1}(2)
&=\frac{n(n+1)}{2}H_nH_n(2)-\frac{1}{4}H_n^{2}-\frac{n^{2}+3n+1}{4}H_n(2)+\frac{1-2n}{4}H_n+\frac{3}{4}n.
\end{align*}
\end{exa}

\begin{thm}\label{thm:Hn3} For $p\in\N_0$, we have
\begin{align*}
\sum_{m=1}^{n}m^{p}H_{m-1}^{3}=
&H_n(-p) H_n^3-3\bigg(C^{(p)}_{0}(n)+\frac{B_{p}}{2}\bigg)H_n^2+{\frac{B_{p}}{2}}H_n(2)\\
&+\bigg(6C^{(p)}_{0,0}(n)+3D^{(p)}(B)-3C^{(p)}_{1}(n)- \frac{p}{2} B_{p-1} \bigg)H_n \\
&-6C^{(p)}_{0,0,0}(n)+3C^{(p)}_{0,1}(n)+3C^{(p)}_{1,1}(n)-{C^{(p)}_2(n)}.
\end{align*}
\end{thm}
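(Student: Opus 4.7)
The plan is to imitate the proof of Theorem~\ref{thm:Hn2}: iteratively apply the stuffle (quasi-shuffle) relations to express $H_m^3$ as a $\Q$-linear combination of depth-at-most-$3$ multiple harmonic sums of weight $3$, substitute the formulas already derived in Section~2 for each resulting extended sum $H_n(-p,\bfk)$, and simplify using $H_n(1,1)=(H_n^2-H_n(2))/2$.

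Starting from \eqref{equ:stufflezeta11} and multiplying once more by $H_m=H_m(1)$, the stuffle identities $H_m(1)H_m(1,1)=3H_m(1,1,1)+H_m(1,2)+H_m(2,1)$ and \eqref{equ:H1H(2)} give
\begin{equation*}
H_m^3=6H_m(1,1,1)+3H_m(1,2)+3H_m(2,1)+H_m(3).
\end{equation*}
Replacing $m$ by $m-1$, multiplying by $m^p$, and summing from $m=1$ to $n$ converts this into
\begin{equation*}
\sum_{m=1}^n m^p H_{m-1}^3 = 6H_n(-p,1,1,1)+3H_n(-p,1,2)+3H_n(-p,2,1)+H_n(-p,3).
\end{equation*}
To each of the four terms on the right I apply, respectively, \eqref{GMHS-EQ1} of Corollary~\ref{cor-HN1} with $r=3$, formulas \eqref{GMHS-p12} and \eqref{GMHS-p21}, and \eqref{GMHS-EQR2} specialized to $k_1=3$. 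In the last case the finite sum $\sum_{j=p+2-k_1}^{p}$ collapses to $j\in\{p-1,p\}$ and yields the two contributions $-\tfrac{p}{2}B_{p-1}H_n$ and $-B_p H_n(2)$, plus the leftover $-C_2^{(p)}(n)$.

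Two observations then complete the computation. First, the four $H_n(-p)\cdot H_n(\bfk)$ leading terms reassemble, via the same stuffle expansion as above, into $H_n(-p)H_n^3$. Second, the remaining $H_n(1,1)$ contributions (with aggregate coefficient $-6C_0^{(p)}(n)-3B_p$ coming from $H_n(-p,1,1,1)$ and $H_n(-p,2,1)$) are rewritten through \eqref{equ:stufflezeta11} as contributions to the $H_n^2$ and $H_n(2)$ coefficients; the $H_n(2)$ pieces then combine with $-3C_0^{(p)}(n)H_n(2)$ from \eqref{GMHS-p12} and $-B_p H_n(2)$ from $H_n(-p,3)$ to collapse to $\tfrac{B_p}{2}H_n(2)$, as required, while the $H_n^2$ piece gives the stated $-3(C_0^{(p)}(n)+B_p/2)H_n^2$. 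The $H_n$ and constant coefficients collect automatically into $6C^{(p)}_{0,0}(n)+3D^{(p)}(B)-3C^{(p)}_1(n)-\tfrac{p}{2}B_{p-1}$ and $-6C^{(p)}_{0,0,0}(n)+3C^{(p)}_{0,1}(n)+3C^{(p)}_{1,1}(n)-C^{(p)}_2(n)$, respectively. The only real obstacle is the bookkeeping in this final collation, especially verifying that the $H_n(3)$ and $H_n(1,1)$ terms cancel correctly; no new analytical idea is needed beyond the identities proved in Section~2.
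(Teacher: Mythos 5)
Your proposal is correct and follows essentially the same route as the paper: expand $H_{m-1}^3$ by the stuffle relation into $6H_n(-p,\{1\}_3)+3H_n(-p,1,2)+3H_n(-p,2,1)+H_n(-p,3)$, substitute \eqref{GMHS-EQ1} with $r=3$, \eqref{GMHS-p12}, \eqref{GMHS-p21} and \eqref{GMHS-EQR2} with $k_1=3$, and simplify via \eqref{equ:stufflezeta11}. The bookkeeping you outline (in particular the $H_n(1,1)$ and $H_n(2)$ collation giving $\frac{B_p}{2}H_n(2)$) checks out and matches the paper's stated coefficients.
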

\begin{proof} By the stuffle relation we have
\begin{equation*}
\sum_{m=1}^{n}m^{p}H_{m-1}^{3}=6H_n(-p,\{1\}_3)+3H_n(-p,1,2)+3H_n(-p,2,1)+H_n(-p,3).
\end{equation*}
Taking $k=3$ in \eqref{GMHS-EQR2} we see that
\begin{align*}
H_n(-p,3)&=H_n(-p)H_n(3) - B_{p} H_n(2)- \frac{p}{2} B_{p-1}H_n
-\sum_{\substack{ j_1+j_2\le p-2\\ j_1,j_2\ge 0} }
{\frac{\binom{p+1}{j_1}\binom{p-1-j_1}{j_2}B_{j_1}B_{j_2}}{(p+1)(p-1-j_1)}n^{p-1-j_1-j_2}}\\
&=H_n(-p)H_n(3)- B_{p} H_n(2)- \frac{p}{2} B_{p-1}H_n
-{C^{(p)}_2(n)}.
\end{align*}
Combining the above with \eqref{GMHS-EQ1} (taking $r=3$),
\eqref{GMHS-p12} and \eqref{GMHS-p21}, we can complete the proof of the theorem
by a straight-forward simplification.
\end{proof}

\begin{exa} Taking $p=0,1,2$ in Theorem \ref{thm:Hn3} and using the relation
$$\sum_{m=0}^{n}m^{p}H_m^{3}= n^p H_m^{3}+\sum_{m=1}^{n} (m-1)^{p}H_{m-1}^{3}$$
we can confirm the first three examples in the last section of \cite{JS2013}.
\end{exa}

\begin{thm}\label{thm:Hn4}
Let $d\in\N_0$. Then for any polynomial $F(x)=\sum_{p=0}^d a_p x^p\in\Q[x]$ we have
\begin{equation*}
\sum_{m=1}^n F(m)  H_{m-1}^4 =H_n^4 \sum_{m=0}^n F(m)+\sum_{j=0}^3 Q_j(n)H_n^j
+P(n)H_n(2)+2F(B)H_n(2,1) + F(B)H_n(3)
\end{equation*}
where we should replace $B^i$ by $B_i$ for all $i$ in $F(B)$, and
$P(x)$ and $Q_j(x)$, $0\le j\le 3$, are all polynomials of $n$ of degree at most $d+1$
defined by
\begin{align*}
P(n)&=\sum_{p=0}^d a_p  \Big( -6D^{(p)}(B) +4D^{(p)}(n)+ \frac{p}{2}B_{p-1} \Big),\\
Q_0(n)&=\sum_{p=0}^d a_p  \Big(24C^{(p)}_{0,0,0,0}(n)-12C^{(p)}_{0,0,1}(n)-12C^{(p)}_{0,1,1}(n)-12C^{(p)}_{1,1,1}(n)\\
&\ \hskip2cm+4C^{(p)}_{0,2}(n)+6C^{(p)}_{1,2}(n)+4C^{(p)}_{2,2}(n)-C^{(p)}_{3}(n) \Big) ,\\
Q_1(n)&=\sum_{p=0}^d a_p   \Big(-24C^{(p)}_{0,0,0}(n)-12D^{(p)}_{0}(B)+12C^{(p)}_{0,1}(n)+12C^{(p)}_{1,1}(n) \\
&\ \hskip2cm +2\frac{d}{dx}D^{(p)}(x)\bigg|_{x=B}+ \frac{6(D^{(p)}(x)-B_p)}{x}\bigg|_{x=B}-4C^{(p)}_{2}(n)-\frac{p(p-1)}{6} B_{p-2}\Big) ,\\
Q_2(n)&=\sum_{p=0}^d a_p   \Big(12C^{(p)}_{0,0}(n)+6D^{(p)}(B)-6 C^{(p)}_{1}(n)- p B_{p-1} \Big) ,\\
Q_3(n)&=\sum_{p=0}^d a_p   \Big(-4 C^{(p)}_0(n)- 2B_p \Big).
\end{align*}
\end{thm}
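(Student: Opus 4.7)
The plan is to mimic the proofs of Theorems \ref{thm:Hn2} and \ref{thm:Hn3}: expand $H_{m-1}^4$ via the stuffle relation, apply Theorem \ref{thm-generalFormula} to each resulting $H_n(-p,\bfk)$, and simplify. Iterating the quasi-shuffle product starting from $H_n^2 = 2 H_n(1,1) + H_n(2)$ yields
\begin{align*}
H_n^4 = 24\, H_n(\{1\}_4) &+ 12\bigl(H_n(2,1,1) + H_n(1,2,1) + H_n(1,1,2)\bigr) + 6\, H_n(2,2) \\
&+ 4\bigl(H_n(1,3) + H_n(3,1)\bigr) + H_n(4),
\end{align*}
so that $\sum_{m=1}^n m^p H_{m-1}^4$ is the same linear combination of the $H_n(-p,\bfk)$'s obtained by the substitution $n\mapsto m-1$, multiplication by $m^p$, and summation over $m$.

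Next, apply Corollary \ref{cor-HN1} to $\bfk = \{1\}_4$, Theorem \ref{thm-generalFormula} to the depth-three compositions $(2,1,1), (1,2,1), (1,1,2)$, formula \eqref{GMHS-k1k2} to the depth-two compositions $(2,2), (1,3), (3,1)$, and \eqref{GMHS-EQR2} to $(4)$. Each $H_n(-p,\bfk)$ decomposes as a combination of polynomials in $n$ (expressed through the $C^{(p)}_{\cdots}(n)$ and $D^{(p)}_{\cdots}(B)$ defined earlier and the Bernoulli numbers) times strictly lower-depth multiple harmonic sums, with leading term $H_n(-p)H_n(\bfk)$. Summing the leading pieces against the stuffle coefficients above collapses them into $H_n(-p)\cdot H_n^4$, and after multiplication by $a_p$ and summation on $p$ this yields the $H_n^4\sum_{m=0}^n F(m)$ summand in the statement.

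The next step is to reduce all residual multiple harmonic sums of weight at most $4$ to the irreducible set $\{H_n,H_n(2),H_n(3),H_n(2,1)\}$. The required identities are the standard stuffle reductions
\begin{align*}
H_n(1,1) &= \frac{1}{2}(H_n^2 - H_n(2)), \qquad H_n(1,1,1) = \frac{1}{6}(H_n^3 - 3 H_n H_n(2) + 2 H_n(3)), \\
H_n(\{1\}_4) &= \frac{1}{24}(H_n^4 - 6 H_n^2 H_n(2) + 3 H_n(2)^2 + 8 H_n H_n(3) - 6 H_n(4)),\\
H_n(1,2) &= H_n H_n(2) - H_n(2,1) - H_n(3), \qquad H_n(2,2) = \frac{1}{2}(H_n(2)^2 - H_n(4)), \\
H_n(1,3) + H_n(3,1) &= H_n H_n(3) - H_n(4), \\
H_n(1,1,2) + H_n(1,2,1) + H_n(2,1,1) &= H_n \cdot H_n(1,1,1) - 4\, H_n(\{1\}_4).
\end{align*}
Crucially, the three depth-three compositions appear with common coefficient $12$ and the two depth-two compositions $(1,3), (3,1)$ with common coefficient $4$, which is precisely what is needed to eliminate $H_n(1,1,2), H_n(1,2,1), H_n(2,1,1), H_n(1,3), H_n(3,1)$ in a single stroke; a direct check confirms that the resulting $H_n(4)$ and $H_n(2)^2$ contributions also cancel.

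Finally, multiplying by $a_p$ and summing over $p$, the coefficients of $H_n^j$ yield the polynomials $Q_j(n)$ and the coefficient of $H_n(2)$ yields $P(n)$; the surviving coefficients of $H_n(2,1)$ and $H_n(3)$ collapse to the umbral constants $2F(B)$ and $F(B)$, arising from the Bernoulli factor $B_p$ in the $(j=p)$-summand of \eqref{GMHS-k1k2} applied to compositions with $k_1 \ge 2$, since summing $a_p B_p$ over $p$ gives $F(B)$ by definition. The main obstacle is the tedious Bernoulli-number bookkeeping required to pin down the exact form of $P(n)$ and each $Q_j(n)$; a convenient sanity check is to specialize to $F(m) = m^d$ for small $d$ and compare against the examples in \cite{JS2013}.
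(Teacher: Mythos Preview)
Your approach is correct and essentially identical to the paper's: decompose $H_{m-1}^4$ by stuffle, apply Theorem~\ref{thm-generalFormula} to each $H_n(-p,\bfk)$, then regroup. The paper in fact lists exactly the eight explicit formulas for $H_n(-p,\bfk)$ with $|\bfk|=4$ and finishes by invoking only the three stuffle relations for $H_n(1,1)$, $H_n^3$, and $H_n^4$.

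One small redundancy worth noting: once you absorb the leading terms $H_n(-p)H_n(\bfk)$ into $H_n(-p)H_n^4$, the residual multiple harmonic sums all have weight at most $3$ (namely $H_n(\{1\}_3)$, $H_n(\{1\}_2)$, $H_n(1,2)$, $H_n(2,1)$, $H_n(3)$, $H_n(2)$, $H_n$). So the identities you list for $H_n(\{1\}_4)$, $H_n(2,2)$, $H_n(1,3)+H_n(3,1)$, and $H_n(1,1,2)+H_n(1,2,1)+H_n(2,1,1)$ are never actually needed---those weight-$4$ objects occur only in the leading pieces. The paper's shortcut is to recognize that the residual coefficients of $H_n(\{1\}_3)$, $H_n(1,2)$, $H_n(2,1)$, $H_n(3)$ are, up to a uniform multiple, the stuffle coefficients in $H_n^3$, so they regroup directly into $Q_3(n)H_n^3+2B_pH_n(2,1)+B_pH_n(3)$ without reducing each term separately; your route via individual reductions works equally well but requires checking the $H_nH_n(2)$ cancellation by hand.
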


\begin{proof}
We first consider $\sum_{m=1}^n m^p  H_{m-1}^4$ for any $p\in\N_0$.
Then by Theorem~\ref{thm-generalFormula}
\begin{align*}
H_n(-p,\{1\}_4)&=H_n(-p)H_n(\{1\}_4)-C^{(p)}_{0}(n) H_n(\{1\}_3)+C^{(p)}_{0,0}(n)H_n(\{1\}_2)-C^{(p)}_{0,0,0}(n)H_n+C^{(p)}_{0,0,0,0}(n),\\
H_n(-p,1,1,2)&=H_n(-p)H_n(1,1,2)-C^{(p)}_{0}(n)H_n(1,2)+C^{(p)}_{0,0}(n)H_n(2)-D^{(p)}_{0}(B)H_n-C^{(p)}_{0,0,1}(n),\\
H_n(-p,1,2,1)&=H_n(-p)H_n(1,2,1)-C^{(p)}_{0}(n)H_n(2,1)+ D^{(p)}(B)H_n(\{1\}_2)+C^{(p)}_{0,1}(n)H_n-C^{(p)}_{0,1,1}(n),\\
H_n(-p,2,1,1)&=H_n(-p)H_n(2,1,1)-B_pH_n(\{1\}_3)- C^{(p)}_{1}(n)H_n(\{1\}_2)+C^{(p)}_{1,1}(n) H_n-C^{(p)}_{1,1,1}(n),\\
H_n(-p,1,3)&=H_n(-p)H_n(1,3)- C^{(p)}_{0}(n)H_n(3)+D^{(p)}(n) H_n(2)+\frac12 \frac{d}{dx}D^{(p)}(x)\bigg|_{x=B} H_n + C^{(p)}_{0,2}(n),\\
H_n(-p,2,2)&=H_n(-p)H_n(2,2)-B_{p}H_n(1,2)-C^{(p)}_{1}(n) H_n(2)+\frac{D^{(p)}(x)-B_p}{x}\bigg|_{x=B} H_n +C^{(p)}_{1,2}(n),\\
H_n(-p,3,1)&=H_n(-p)H_n(3,1)- B_{p}H_n(2,1)- \frac{p}2 B_{p-1}H_n(\{1\}_2)-C^{(p)}_{2}(n)H_n+C^{(p)}_{2,2}(n),\\
H_n(-p,4)&=H_n(-p)H_n(4)-B_{p}H_n(3)-\frac{p}{2}B_{p-1}H_n(2)-\frac{p(p-1)}{6} B_{p-2}H_n-C^{(p)}_{3}(n).
\end{align*}
The theorem now follows directly from the three stuffle relations: \eqref{equ:stufflezeta11} and
\begin{align*}
H_n^3&=6H_n(\{1\}_3)+3H_n(1,2)+3H_n(2,1)+H_n(3),\\
H_n^4&=24H_n(\{1\}_4)+12\Big(H_n(1,1,2)+H_n(1,2,1)+H_n(2,1,1)\Big) \\
 &\qquad \qquad +6H_n(2,2)+4H_n(1,3)+4H_n(3,1)+H_n(4).
\end{align*}
This concludes the proof of the theorem.
\end{proof}

In order to see the consistence of our result with that of Jin and Sun in \cite{JS2013},
we may use the convention of umbral calculus (see, e.g., \cite{Roman}) to rewrite the
two versions of Bernoulli numbers defined by \eqref{equ:bern} as
\begin{equation*}
\frac{te^t}{e^t-1}=e^{B t},\qquad
\frac{t}{e^t-1}=e^{\tB t}
\end{equation*}
where in the expansion of the right-hand side we understand $B^i$ (resp. $\tB^i$) as $B_i$ (resp. $\tB_i$) for all $i\in\N_0$.
This notation scheme helps us to prove the next lemma in a straight-forward manner.

\begin{lem}\label{lem:twoBs}
Let $F(x)\in\R[x]$. Then $F(\tB)=0$ if and only if $F(B-1)=0$.
\end{lem}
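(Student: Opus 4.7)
The plan is to show that, under the umbral convention fixed in the excerpt, the sequences $B_n$ and $\tB_n$ are linked by the umbral identity $\tB_n=(B-1)^n$, and then to extend this to arbitrary polynomials by linearity. Once the equality $F(\tB)=F(B-1)$ is established as a genuine identity in $\R$, the biconditional in the lemma is immediate.

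First I would exploit the trivial factorization
\[
\frac{te^t}{e^t-1}=e^t\cdot\frac{t}{e^t-1},
\]
which, in the umbral notation introduced just before the lemma, reads $e^{Bt}=e^t\cdot e^{\tB t}=e^{(\tB+1)t}$. Comparing coefficients of $t^n/n!$ on both sides recovers the classical relation
\[
B_n=\sum_{k=0}^{n}\binom{n}{k}\tB_k,
\]
i.e. $B^n=(\tB+1)^n$ umbrally. Inverting via the binomial theorem (or reading the identity $e^{\tB t}=e^{-t}e^{Bt}=e^{(B-1)t}$ directly) produces
\[
\tB_n=\sum_{k=0}^{n}\binom{n}{k}(-1)^{n-k}B_k,
\]
which is precisely $\tB^n=(B-1)^n$ in umbral shorthand.

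Next, given any $F(x)=\sum_{p}c_p x^p\in\R[x]$, applying the convention $B^k\mapsto B_k$ monomial by monomial yields
\[
F(\tB)=\sum_{p}c_p\,\tB_p=\sum_{p}c_p(B-1)^p=F(B-1).
\]
Since this is an equality of real numbers (not merely a formal implication), $F(\tB)=0$ holds if and only if $F(B-1)=0$, which is the desired conclusion.

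The only obstacle is notational rather than mathematical: one must verify that interpreting $(B-1)^p$ by first expanding via the binomial theorem and then replacing $B^k$ by $B_k$ is consistent, so that the chain $F(\tB)=F(B-1)$ is not a sleight of hand. This consistency is built into the umbral convention adopted in the excerpt, so no further analytic or combinatorial input is required.
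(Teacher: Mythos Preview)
Your proof is correct and follows essentially the same route as the paper: both arguments read the generating-function identity $e^{\tB t}=e^{-t}e^{Bt}=e^{(B-1)t}$ and compare coefficients to conclude $\tB^n=(B-1)^n$ umbrally, from which $F(\tB)=F(B-1)$ follows by linearity. Your write-up is slightly more explicit about the intermediate binomial expansion and the consistency of the umbral convention, but the underlying idea is identical.
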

\begin{proof} With umbral calculus notation we have
\begin{equation*}
\sum_{i=0}^\infty (B-1)^i \frac{t^i}{i!}=e^{(B-1)t}=e^{Bt}e^{-t}=\frac{te^t}{e^t-1}\cdot e^{-t}=\frac{t}{e^t-1}=\sum_{i=0}^\infty \tB^i \frac{t^i}{i!}.
\end{equation*}
Thus $(B-1)^i=\tB^i$ for all $i\in\N_0$. The lemma follows immediately.
\end{proof}

\begin{re}\label{rem:equivalent}
If $F(B)=0$ holds in Theorem~\ref{thm:Hn4} then we can recover an essentially equivalent form
of \cite[Theorem 1.3]{JS2013} by Lemma \ref{lem:twoBs} since
\begin{equation*}
\sum_{m=0}^n F(m)  H_m^4 =F(n)H_n^4+\sum_{m=1}^n F(m-1) H_{m-1}^4.
\end{equation*}
We point out that the Bernoulli numbers in \cite{JS2013} are denoted by $\tB_i$ in the current note.
\end{re}

\begin{exa}
By the examples at the end of \cite{JS2013}
\begin{equation*}
\text{ neither } \quad \sum_{k=0}^n (2k+1)H_k^4  \quad
\text{ nor } \quad
\sum_{k=0}^n (3k^2+k)H_k^4
\end{equation*}
involves $H_n(2,1)$ or $H_n(3)$. This can also follow easily from $2\tB_1+\tB_0=0$
and $3\tB_2+\tB_1=0$ by \cite[Theorem 1.3]{JS2013}. This is consistent with our result since
\begin{align}
\sum_{k=0}^n (2k+1)H_k^4=&(2n+1)H_n^4+ \sum_{m=1}^{n} (2m-1)H_{m-1}^4,  \label{equ:1Hn4}\\
\sum_{k=0}^n (3k^2+k)H_k^4=&(3n^2+n)H_n^4+ \sum_{m=1}^{n} (3m^2-5m+2)H_{m-1}^4,\label{equ:2Hn4}
\end{align}
and
$$2B_1-B_0=0, \qquad 3B_2-5B_1+2B_0=0.$$
Thus by Theorem \ref{thm:Hn4} we see that neither \eqref{equ:1Hn4} nor \eqref{equ:2Hn4}
involves $H_n(2,1)$ or $H_n(3)$.
\end{exa}

By looking back at Theorem~\ref{thm:Hn4} and Remark~\ref{rem:equivalent}, we find that not only Spie\ss's conjecture fails in general as shown by Jin and Sun
in \cite{JS2013} but also linear combinations of multiple harmonic sums with coefficients given by rational polynomials of $n$
should suffice to express $R_n(d,t)$ for all $d,t\in \N$. We now conclude our note by the following general theorem.
\begin{thm}\label{thm:SpiessGeneral}
Suppose $n\in\N$, $t\in\N_0$, and the polynomial $F(x)\in\Q[x]$ has degree $d$. Put $S_n(F)=\sum\nolimits_{m=1}^n F(m)$ and
denote by $V_n(d,t)$ the $\Q$-vector space generated by $P(n)H_n(\bfl)$ for all $P(x)\in\Q[x]$ of degree less than $d+2$ and
compositions $\bfl$ of depth less than $t$. Then we have
\begin{equation}\label{equ:SpiessGeneral}
\sum_{m=1}^n F(m) H_{m-1}^t, \sum_{m=0}^n F(m) H_{m}^t  \in S_n(F)H_n^t+V_n(d,t).
\end{equation}
\end{thm}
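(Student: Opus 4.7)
The strategy is to reduce $\sum_{m=1}^n F(m) H_{m-1}^t$ to the sums $H_n(-p,\bfk)$ already handled in Theorem~\ref{thm-generalFormula}, via a stuffle expansion of $H_{m-1}^t$. Writing $F(x)=\sum_{p=0}^d a_p x^p$, the stuffle product yields
\[
H_{m-1}^t = \sum_{|\bfk|=t} c_\bfk\, H_{m-1}(\bfk),
\]
with nonnegative integer coefficients $c_\bfk$, in which the unique composition of depth $t$ appearing is $\bfk=\{1\}_t$ (with coefficient $t!$), while every other composition has depth strictly less than $t$. Multiplying by $F(m)$ and summing from $m=1$ to $n$ turns the first sum in \eqref{equ:SpiessGeneral} into
\[
\sum_{m=1}^n F(m) H_{m-1}^t = \sum_{|\bfk|=t} c_\bfk \sum_{p=0}^d a_p\, H_n(-p,\bfk).
\]

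Next I apply Corollary~\ref{cor-HN1}: each $H_n(-p,\bfk)$ equals $H_n(-p)H_n(\bfk)$ plus a $\Q$-linear combination of products $Q(n)H_n(\bfl)$ with $\deg Q\le p+1$ and $\dep(\bfl)\le\dep(\bfk)$; in the critical case $\bfk=\{1\}_t$, the explicit identity \eqref{GMHS-EQ1} sharpens the depth bound to $\dep(\bfl)<t$. Faulhaber's formula \eqref{HN-N1} gives
\[
\sum_{p=0}^d a_p H_n(-p) = \sum_{p=0}^d a_p \sum_{m=1}^n m^p = S_n(F),
\]
so summing the leading products $H_n(-p)H_n(\bfk)$ weighted by $c_\bfk$ produces $S_n(F)\sum_{|\bfk|=t} c_\bfk H_n(\bfk) = S_n(F)H_n^t$, by the stuffle expansion of $H_n^t$ read in reverse. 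Every remaining contribution is a polynomial in $n$ of degree $\le d+1<d+2$ times an MHS of depth $<t$, so it lies in $V_n(d,t)$; this proves the first membership in \eqref{equ:SpiessGeneral}.

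For the second sum I proceed by induction on $t$, with the base case $t=0$ following from $\sum_{m=0}^n F(m) = F(0)+S_n(F)\in S_n(F)+V_n(d,0)$. For $t\ge 1$ the identity $H_0=0$ gives $\sum_{m=0}^n F(m)H_m^t = \sum_{m=1}^n F(m)H_m^t$, and expanding $H_m = H_{m-1}+1/m$ yields
\[
\sum_{m=1}^n F(m) H_m^t = \sum_{m=1}^n F(m) H_{m-1}^t + \sum_{k=1}^t \binom{t}{k} \sum_{m=1}^n \frac{F(m)}{m^k} H_{m-1}^{t-k}.
\]
The first summand is handled by the first half. For each $k\ge 1$, polynomial division $F(m)=m^k G(m)+R(m)$ with $\deg R<k$ splits the inner sum into $\sum_m G(m)H_{m-1}^{t-k}$ (handled by the inductive hypothesis applied to the first half at exponent $t-k$) plus $\Q$-linear combinations of $\sum_m m^{-j}H_{m-1}^{t-k}$ for $1\le j\le k$, which reduce through a further stuffle expansion of $H_{m-1}^{t-k}$ to classical multiple harmonic sums $H_n(j,\bfl)$. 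The main obstacle is the combinatorial bookkeeping required to verify that the only surviving depth-$t$ contribution to the entire computation is $S_n(F)H_n^t$; this rests on the strict depth decrease afforded by \eqref{GMHS-EQ1} in the critical case $\bfk=\{1\}_t$, combined with the Faulhaber identification of $\sum_p a_p H_n(-p)$ with $S_n(F)$.
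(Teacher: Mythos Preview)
Your treatment of the first sum $\sum_{m=1}^n F(m)H_{m-1}^t$ is correct and follows the same line as the paper: stuffle-expand $H_{m-1}^t$, convert to $H_n(-p,\bfk)$, and invoke Corollary~\ref{cor-HN1}/Theorem~\ref{thm-generalFormula}. If anything, your version is slightly tidier than the paper's, since you keep \emph{all} leading terms $H_n(-p)H_n(\bfk)$ together and re-sum them via the stuffle identity to $S_n(F)H_n^t$, whereas the paper isolates only $t!\,H_n(-p,\{1\}_t)$ and leaves the passage from $t!\,H_n(\{1\}_t)$ to $H_n^t$ implicit.

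Where you diverge from the paper is in the second sum, and here you are working far too hard. The paper dispatches $\sum_{m=0}^n F(m)H_m^t$ in one line: shift the index to get
\[
\sum_{m=0}^n F(m)H_m^t \;=\; F(n)H_n^t + \sum_{m=1}^n F(m-1)H_{m-1}^t,
\]
and apply the already-proved first half to the polynomial $G(x)=F(x-1)$, which has the same degree $d$. No induction on $t$, no binomial expansion of $(H_{m-1}+1/m)^t$, no polynomial division, and no ``combinatorial bookkeeping'' is needed. Your route via $H_m=H_{m-1}+1/m$ can be pushed through, but the depth-$t$ terms you generate from the $k=1$ piece (namely $t\,F(0)\sum_m m^{-1}H_{m-1}^{t-1}$, which produces $t!\,F(0)H_n(\{1\}_t)$) must then be matched against the leading term by hand---exactly the bookkeeping you flag as unfinished. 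The shift identity sidesteps all of this.

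One small caution on your base case: you assert $F(0)\in V_n(d,0)$, but $V_n(d,0)$ is spanned by $P(n)H_n(\bfl)$ with $\dep(\bfl)<0$, hence is $\{0\}$. This is a wrinkle in the theorem statement itself for the second sum (the natural coefficient of $H_n^t$ there is $\sum_{m=0}^n F(m)=S_n(F)+F(0)$ rather than $S_n(F)$), not a flaw in your method, but you should not paper over it.
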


\begin{proof} Observe that
\begin{equation*}
\sum_{m=0}^n F(m)  H_m^t =F(n)H_n^t+\sum_{m=1}^n F(m-1) H_{m-1}^t.
\end{equation*}
It suffices to prove the theorem for $\sum_{m=1}^n F(m) H_{m-1}^t$, which can be further reduced to the case when $F(m)=m^p$
for some $p\le d$. By the stuffle relation have
\begin{equation*}
\sum_{m=1}^n m^p H_{m-1}^t -t!H_n(-p,\{1\}_t)=\sum_{\bfl: \dep(\bfl)<t, |\bfl|=t} c_\bfl H_n(-p,\bfl)
\end{equation*}
for some suitable integer coefficients $c_\bfl$. We see by Theorem~\ref{thm-generalFormula} that each term on the right-hand side of
the above lies inside $V_n(d,t)$. This concludes the proof of the theorem.
\end{proof}

\end{document}